\definecolor{mygray}{gray}{0.6}
\newenvironment{myfont}[2][]{\csname#2\endcsname[#1]}{}
\newcommand{\stkout}[1]{\ifmmode\text{\sout{\ensuremath{#1}}}\else\sout{#1}\fi}
\newcommand{\bea}{\begin{eqnarray}}
\newcommand{\eea}{\end{eqnarray}}
\def\be{\begin{equation}}
\def\ee{\end{equation}}
\definecolor{red}{rgb}{1,0,0}
\definecolor{blue}{rgb}{0,0,1}
\definecolor{dblue}{rgb}{0,0,0.4}
\definecolor{green}{rgb}{0,1,0}
\definecolor{black}{rgb}{0,0,0}
\definecolor{white}{rgb}{1,1,1}
\definecolor{brn}{rgb}{.8,.4,.0}
\definecolor{redo}{rgb}{1,.5,.0}
\definecolor{ddgrn}{rgb}{0,0.4,0}
\definecolor{dgrn}{rgb}{0,0.55,0}
\definecolor{dbl}{rgb}{0,0,0.5}
\newcommand{\R}{\mathbb{R}}
\newcommand{\bpm}{\begin{pmatrix}}
\newcommand{\epm}{\end{pmatrix}}
\newcommand{\bmm}{\begin{matrix}}
\newcommand{\emm}{\end{matrix}}
\def\R{{\mathbb{R}}}
\newcommand {\emptycomment}[1]{}
\newcommand{\nn}{{\nonumber}}
\newcommand{\Sec}[1]{Sec.~\ref{#1}}
\newtheorem{theorem}{Theorem}[section]
\newtheorem{lemma}[theorem]{Lemma}
\begin{document}

\title{The Dijkgraaf-Witten invariant is a partial case of the photography method}

\author{Vassily Olegovich Manturov}

\address{Moscow Institute of Physics and Technology, Moscow 141700, Russia \\
Nosov Magnitogorsk State Technical University, Zhilyaev Laboratory of mechanics of gradient nanomaterials, 38 Lenin prospect, Magnitogorsk, 455000, Russian Federation \\
vomanturov@yandex.ru}

\maketitle

\begin{abstract}
In \cite{ManturovNikonovMay2023,ManturovWanMay2023} the author discovered a very general principle (called {\em the photography principle}) which allows one:

a) To solve various equations 
(like pentagon equation)

b) To construct invariants of manifolds.

The advantage of that principle is that it deals with a very general notion of {\em data} and {\em data transmission} which may be of any kind.

In the present paper, we show that the definition of the Dijkgraaf-Witten invariants of manifolds can be thought of as an evidence of the above principle.

\end{abstract}

Keywords: Dijkgraaf-Witten invariants, photography method

MSC 2020: 20F36, 13F60, 57K20, 57K31

\section{Introduction}
A very general method, {\em the photography method} for solving the pentagon equation, and other equations, was proposed by the author in \cite{ManturovNikonovMay2023}. Roughly speaking, if one has several {\em states} (say, triangulations of a pentagon), some {\em data} for each state (e.g., edge lengths or triangle areas) and a {\em rule (formula)} for translating data from one state to another (say, Ptolemy equation), then after returning to the initial state, we obtain the same data.

Saying more standardly, this means that the photography method gives a solution to some equations in terms of ``data'' whatever this data means (in geometrical setting one can have lengths, angles, areas, volumes, etc.); even the amount of data may change, so, it is sometimes hard to say what we mean by an ``equation''.

We state the photography method as follows: ``We associate some data to each of the triangulations of an $n$-manifold. Then we use some data transmission law: 
if we know all for one triangulation, then we should know all for an adjacent triangulation, hence for any other triangulation.'' We may use this method to solve equations or construct invariants.

For $n\ge3$, $n$-manifolds can be described as equivalence classes of triangulations modulo Pachner moves \cite{Pachner1,Pachner2}. With a triangulation, one can associate some {\em data} and some {\em data transmission laws}: some elements in a group $G$ corresponding to 1-simplices and the values of a group $n$-cocycle $\phi\in H^n(G,A)$ ($A$ is an abelian group) corresponding to $n$-simplices, and the way these values are changing under moves.

Taking this together, we can get to an equation (a system of equations) needed to get a state-sum associated to triangulations, which will be invariant under, say, (2-3)-Pachner moves and (1-4)-Pachner moves. This equation is exactly the $n$-cocycle condition.

There are two versions of the photography method.
One of them is for solving equations \cite{ManturovWanMay2023}
and 
the other one is for constructing invariants \cite{KauffmanKimManturovNikonov}. 
In \cite{ManturovWanMay2023} we solve the pentagon equation in two ways: one by using the realisability of edge-lengths as lambda-lengths in the hyperbolic plane and Ptolemy relation, the other one by using the areas.
In \cite{KauffmanKimManturovNikonov} we construct invariants of manifolds by using the following observation.
If we have $(n+1)$ points (say, 5)
and all edges between them,
and for two $n$-tuples of points 
some relations hold,
then we can restore the last edge
so that the relations hold for all possible $n$-tuples.
For such relations, one can take
{\em realisability of edge-lengths as edge-lengths of a simplex in some Euclidean space}.

The invariants we have constructed before \cite{KauffmanKimManturovNikonov}
%%%%%%%%%%%%Refer to my previous paper
and here
are not exactly
``Turaev-Viro invariants'' \cite{TV} or
Dijkgraaf-Witten invariants \cite{DW}.
 
For example, in the paper \cite{ManturovNikonovMay2023}
%%%%%%%%%refer to Manturov-Nikonov
instead of taking a finite palette of colours and solving the Biedenharn-Elliot
equations, we guess how to get a solution.
The main guess is the ``geometrical'' observation that
``if four quadrilaterals ABCD and ABCE are inscribed then
any of ABDE, ACDE, BCDE is inscribed''.
 
Similarly, in \cite[Sec. 3]{ManturovWanMay2023} 
%%%%%%%%the paper by Manturov-Wan, refer to section and theorem number about Korepanov's solution
we used ``areas'' of triangles in order to get some matrices satisfying pentagon identities.
But these ``areas'' are not areas in some proper sense: they are just variables which
satisfy the only condition:
the sum of ``areas'' of a whole polyhedron equals the sum of its constituent parts.
Then we take it more abstractly, and get a solution to the pentagon identity suggested
by Korepanov in \cite{K19}.
%%%%%%%%%%%%His paper.

In the present paper a similar guess answers the question ``How to get Dijkgraaf-Witten-like invariants''?
We take a triangulation and associate data to its 1-dimensional edges and top-dimensional cells
in order to get an invariant under Pachner moves. The data we want to associate to edges are
the edge lengths, and the data associate to top-dimensional simplices are ``volumes''. When we try
to figure out what we really need from volumes, we see that this should be nothing but the cocycle
condition \eqref{eq:cocycle}.
%%%%%%%%%%%%Refer to the formula in our paper
A crucial observation explaining the ``naturality'' of the Dijkgraaf-Witten invariants is Lemma \ref{volume} which allows to axiomatize volumes as cocycles.

The paper is organized as follows. In \Sec{sec:definition}, we review the definition of the Dijkgraaf-Witten invariants. In \Sec{sec:main}, we show that the Dijkgraaf-Witten invariants are a partial case of the photography method. In \Sec{sec:further}, we list some directions of further research.

\subsection{Acknowledgements}

The author is grateful to L.A.Grunwald, L.H.Kauffman, Seongjeong Kim, V.G.Turaev, and Zheyan Wan for helpful discussion and comments.
The study was supported by the grant of Russian Science Foundation (No. 22-19-20073 dated March 25, 2022 ``Comprehensive study of the possibility of using self-locking structures to increase the rigidity of materials and structures''). 

\section{Dijkgraaf-Witten invariants}\label{sec:definition}
In the present section, we review the Dijkgraaf-Witten invariants for $n$-manifolds. For $n=3$, in \cite{DW} Dijkgraaf and Witten gave a combinatorial definition for Chern-Simons with finite gauge groups using 3-cocycles of the group cohomology. This was generalised to higher dimensional manifolds in \cite{Freed}. We follow the description in \cite{CKS}, see also \cite{Wakui} (we use the Pachner moves). 

Let $T$ be a triangulation of an oriented closed $n$-manifold $M$, with $a$ vertices and $m$ $n$-simplices. Give an ordering to the set of vertices. Let $G$ be a finite group. Let $\phi: \{\text{oriented edges}\}\to G$ be a map such that
\begin{enumerate}
\item 
for any triangle with vertices $v_0,v_1,v_2$ of $T$, $\phi(\langle v_0,v_2\rangle)=\phi(\langle v_0,v_1\rangle)\phi(\langle v_1,v_2\rangle)$, where $\langle v_i,v_j\rangle$ denotes the oriented edge with endpoints $v_i$ and $v_j$, and
\item
$\phi(-e)=\phi(e)^{-1}$.
\end{enumerate}

Let $\alpha: G^{\times n}\to A$, $(g_1,g_2,\dots,g_n)\mapsto \alpha[g_1|g_2|\cdots|g_n]\in A$, be an $n$-cocycle valued in a multiplicative abelian group $A$. The $n$-cocycle condition is 
\bea\label{eq:cocycle}
&&\alpha[g_2|g_3|\cdots|g_{n+1}]\alpha[g_1g_2|g_3|\cdots|g_{n+1}]^{-1}\alpha[g_1|g_2g_3|\cdots|g_{n+1}]\cdots\nn\\
&&\cdot\alpha[g_1|g_2|\cdots|g_ng_{n+1}]^{(-1)^n}\alpha[g_1|g_2|\cdots|g_n]^{(-1)^{n+1}}=1.
\eea
Then the Dijkgraaf-Witten invariant is defined by
\bea\label{eq:DW}
Z_M=\frac{1}{|G|^a}\sum_{\phi}\prod_{i=1}^mW(\sigma_i,\phi)^{\epsilon_i}.
\eea
Here $a$ denotes the number of the vertices of the given triangulation,
$W(\sigma,\phi)=\alpha[g_1|g_2|\cdots|g_n]$ where $\phi(\langle v_0,v_1\rangle)=g_1$, $\phi(\langle v_1,v_2\rangle)=g_2$, $\dots$, $\phi(\langle v_{n-1},v_n\rangle)=g_n$, for the $n$-simplex $\sigma=|v_0v_1v_2\cdots v_n|$ with the ordering $v_0<v_1<v_2<\cdots<v_n$, and $\epsilon=\pm1$ according to whether or not the orientation of $\sigma$ with respect to the vertex ordering matches the orientation of $M$.

%Then one checks the invariance of this state sum under Pachner moves, see Fig.~\ref{fig:Pachner} and Fig.~\ref{fig:Pachner2} for the case $n=3$.

\section{Dijkgraaf-Witten invariants are a partial case of the photography method}\label{sec:main}

We associate some data to each of the triangulations of an $n$-manifold. We may associate data to edges and $n$-simplices. Then we use some data transmission law: if we know all for one triangulation, then we should know all for an adjacent triangulation, hence for any other triangulation. 
The data are the elements of $G$ associated to the edges and the values of an $n$-cocycle on the $n$-simplices and we consider the state sum defined by \eqref{eq:DW}. 
The elements of $G$ associated to the edges of a triangle satisfy the product rule.
The data transmission law is the $n$-cocycle condition. Two adjacent triangulations are connected by a Pachner move. 

\begin{figure}[h]
\centering\includegraphics[width = 0.9\textwidth]{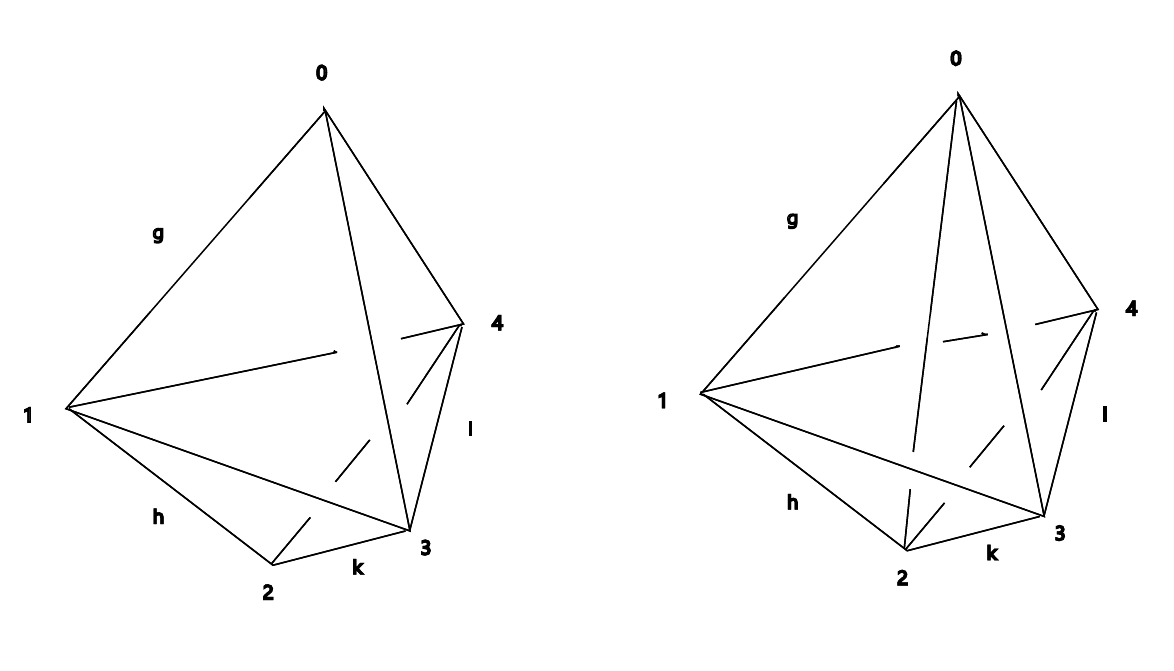}
\caption{Pachner 2-3 moves. The tetrahedra 0134 and 1234 on the left becomes the tetrahedra 0123, 1234, and 0124 on the right.}\label{fig:Pachner}
\end{figure}

\begin{figure}[h]
\centering\includegraphics[width = 0.9\textwidth]{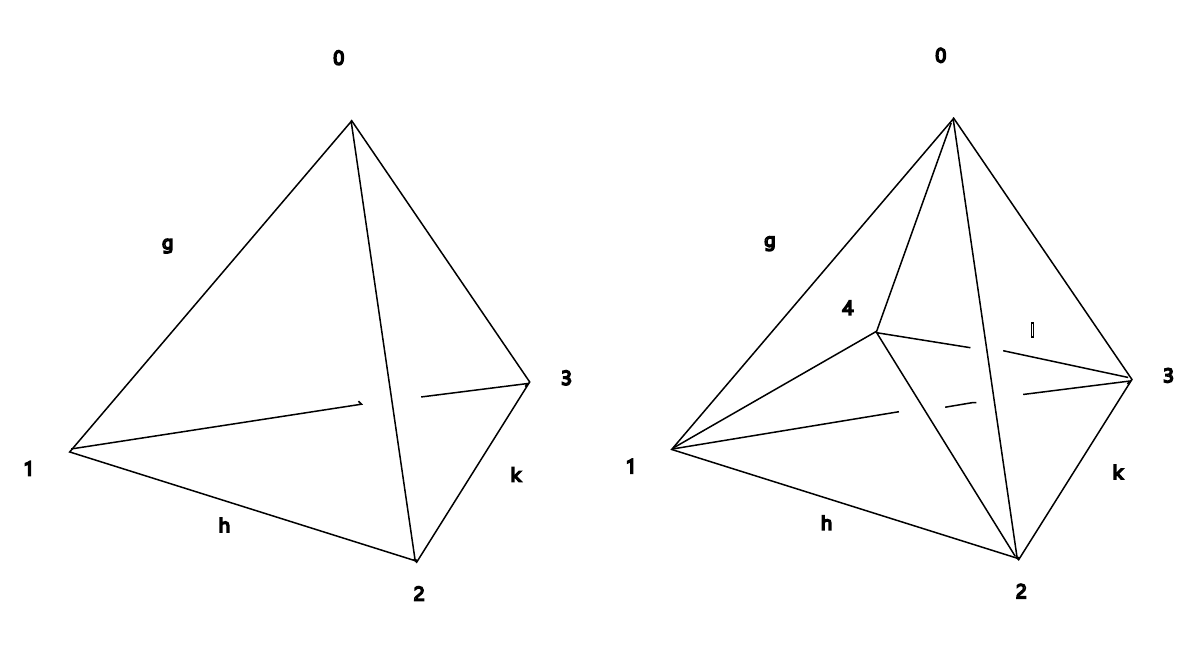}
\caption{Pachner 1-4 moves. The tetrahedron 0123 on the left becomes the tetrahedra 0124, 0234, 0134, and 1234 on the right.}\label{fig:Pachner2}
\end{figure}

For $n=3$, the Pachner moves are the Pachner 2-3 and 1-4 moves, see Fig.~\ref{fig:Pachner} and Fig.~\ref{fig:Pachner2}. The Pachner 2-3 move does not create a new vertex. The values of the 3-cocycle on the initial 2 3-simplices and the final 3 3-simplices satisfy the 3-cocycle condition. The 3-cocycle condition yields the invariance of the state sum \eqref{eq:DW} under the Pachner 2-3 moves. The Pachner 1-4 move creates a new vertex. Hence there is an additional factor $\frac{1}{|G|}$ in the state sum \eqref{eq:DW}. Also there is a new variable on an additional edge which runs through the elements of $G$. For each choice of the new variable, the values of the 3-cocycle on the initial 3-simplex and the final 4 3-simplices satisfy the 3-cocycle condition. Hence each summand in $\frac{1}{|G|}\sum$ equals the initial value of the 3-cocycle. Therefore the state sum \eqref{eq:DW} is invariant under the Pachner 1-4 moves.

%We show that the values of the 3-cocycle on 3-simplices can be realised as the volumes of hyperbolic tetrahedra.

The cocycle condition can be formulated in terms of volumes. One crucial observation is the following obvious lemma.

\begin{lemma}\label{volume}
Given an $(n+1)$-simplex $\Delta^{n+1}$ with vertices belonging to the space $\R^{n+1}$. Then the sum of volumes of 
the faces of codimension 1 of a projection of $\Delta^{n+1}$ onto the subspace $\R^n$ with appropriate signs is zero. 
\end{lemma}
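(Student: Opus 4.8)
The plan is to recognize the identity as an instance of Stokes' theorem applied to a single constant-coefficient top-form, and then to realize the same cancellation by a one-line determinant computation. Write $\Delta^{n+1}=[v_0,v_1,\dots,v_{n+1}]$ with $v_j\in\R^{n+1}$, and let $\pi\colon\R^{n+1}\to\R^n$ be the coordinate projection onto the chosen $\R^n$. The codimension-one faces are the $n$-simplices $F_i=[v_0,\dots,\widehat{v_i},\dots,v_{n+1}]$ (delete the $i$-th vertex), and the induced boundary orientation attaches to $F_i$ the sign $(-1)^i$; these are the ``appropriate signs'' in the statement.

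For the conceptual proof I would introduce the constant $n$-form $\omega=dx_1\wedge\cdots\wedge dx_n$ on $\R^{n+1}$, i.e.\ the pullback under $\pi$ of the volume form of $\R^n$. By construction $\int_{F_i}\omega$ is exactly the signed $n$-volume $\Vol(\pi(F_i))$, so the quantity in the lemma equals $\sum_{i=0}^{n+1}(-1)^i\int_{F_i}\omega=\int_{\partial\Delta^{n+1}}\omega$. Stokes' theorem rewrites this as $\int_{\Delta^{n+1}}d\omega$, and $d\omega=0$ because $\omega$ has constant coefficients (equivalently, it is a closed top-degree form on the target $\R^n$). Hence $\sum_i(-1)^i\Vol(\pi(F_i))=0$.

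For a computational proof avoiding integration, I would express each signed volume as a determinant: $\Vol(\pi(F_i))=\tfrac{1}{n!}\det M_i$, where $M_i$ is the $(n+1)\times(n+1)$ matrix whose columns are the homogeneous coordinates $\binom{1}{\pi(v_j)}$ for $j\neq i$. Stacking two identical rows of ones above the rows of the $\pi(v_j)$ produces an $(n+2)\times(n+2)$ matrix with a repeated row, hence of vanishing determinant; cofactor expansion along that row yields precisely $\sum_{i}(-1)^i\det M_i=0$, the same identity.

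The only point needing care---and what I treat as the main (minor) obstacle---is sign bookkeeping: one must check that the $(-1)^i$ coming from the simplicial boundary operator agrees with the sign produced by deleting the $i$-th column in the determinant expansion and with the chosen orientation convention for $\Vol$ in $\R^n$. Once a single convention for the signed volume of an ordered $n$-simplex is fixed, both arguments collapse to a one-line invocation of a standard fact: the vanishing of $\int_{\partial}$ of a closed form, respectively the vanishing of a determinant with a repeated row.
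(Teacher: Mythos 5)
Your proposal is correct, and either of your two arguments (Stokes' theorem applied to the pulled-back volume form, or the cofactor expansion of a determinant with a repeated row) proves the lemma in full generality. The paper, by contrast, offers no formal proof at all: it declares the lemma ``obvious'' and supports it only with the $n=2$ picture, namely that a planar quadrilateral (the projection of a tetrahedron) can be cut into two triangles along either diagonal, so the two pairs of areas have equal sums. Your route is therefore genuinely different in character. Where the paper relies on low-dimensional geometric intuition, you isolate the precise algebraic mechanism: the sum in question is $\int_{\partial\Delta^{n+1}}\pi^*(\mathrm{vol}_{\R^n})$, which vanishes because the integrand is a closed form, or equivalently it is the expansion of a singular $(n+2)\times(n+2)$ matrix along one of its two identical rows of ones. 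This buys a uniform proof for all $n$ and, importantly for the paper's purposes, makes transparent why the statement is exactly a coboundary identity --- the signs $(-1)^i$ are those of the simplicial boundary operator --- which is the point the author needs in order to ``axiomatize volumes as cocycles'' but which the quadrilateral example only suggests. The one issue you flag, reconciling the boundary-operator sign with the sign from deleting the $i$-th column and with the orientation convention for signed volume, is indeed the only delicate point, and your remedy (fix a single convention for the signed volume of an ordered $n$-simplex and check both expansions against it) suffices.
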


Say, for $n=2$ one can consider a quadrilateral whose area can be split into two triangles in two ways (a quadrilateral is a projection of a tetrahedron), hence, the sum of two areas is equal to the sum of the other two areas.
We can articulate it as: the ``area'' function on $n$-simplices (in the latter case, $2$-simplices) gives zero on the ``boundary'' of an $(n+1)$-simplex ($3$-simplex). 

In Lemma \ref{volume}, we showed that a volume is just a cocycle. Though a cocycle need not be a volume, we may try to axiomatize volumes further and further to get to cocycles finally.
Therefore, $n$-cocycles can be treated as volumes of simplices in dimension $n$.

\begin{theorem}
The Dijkgraaf-Witten invariants are a partial case of the photography method.
\end{theorem}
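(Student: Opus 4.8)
The plan is to verify, component by component, that the Dijkgraaf-Witten construction of \Sec{sec:definition} instantiates each ingredient of the photography method, and then to show that the required invariance collapses onto the cocycle machinery already assembled. First I would fix the dictionary. The \emph{states} are the ordered triangulations $T$ of the oriented closed $n$-manifold $M$; the \emph{data} attached to a state are the edge-labelling $\phi\colon\{\text{oriented edges}\}\to G$, constrained by the triangle product rule and by $\phi(-e)=\phi(e)^{-1}$, together with the induced weights $W(\sigma_i,\phi)$ on the $n$-simplices; and the \emph{data transmission law} is the $n$-cocycle condition \eqref{eq:cocycle}. The quantity to be transmitted unchanged is the state sum $Z_M$ of \eqref{eq:DW}.

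Next I would establish that the graph of states is connected under the adjacency relation, so that ``knowing all for one triangulation'' genuinely propagates ``hence to any other triangulation.'' Here two triangulations are \emph{adjacent} precisely when they differ by a single Pachner move, and connectedness is exactly Pachner's theorem \cite{Pachner1,Pachner2}, valid for every $n\ge 3$. This is the structural fact that underwrites the photography principle.

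The central step is to check that $Z_M$ is invariant under each elementary move, which is the concrete meaning of ``data transmission.'' I would organize the $n+2$ codimension-$1$ faces of an $(n+1)$-simplex on ordered vertices $v_0<\cdots<v_{n+1}$ and observe that the $n+2$ factors of \eqref{eq:cocycle} are exactly the weights of these faces with alternating signs. For a $(k,n+2-k)$ move that preserves the vertex set, the ratio of $\prod_i W(\sigma_i,\phi)^{\epsilon_i}$ before and after the move is then precisely the left-hand side of \eqref{eq:cocycle}, hence equals $1$; I would present the $n=3$, $2$-$3$ move already sketched in the text as the model computation and indicate that the general vertex-preserving move reduces to the same alternating product over the faces of $\Delta^{n+1}$. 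For the move that introduces a new vertex (the $1$-to-$(n+1)$ move), the normalization $1/|G|^a$ supplies the compensating factor $1/|G|$, and summing the new edge variable over $G$—with the cocycle condition holding for every value of that variable—returns the original summand, exactly as in the $1$-$4$ computation already given. Finally, invoking Lemma \ref{volume} I would identify the cocycle condition with the geometric statement that the signed ``volumes'' of the codimension-$1$ faces of a projected $(n+1)$-simplex sum to zero, which is what makes \eqref{eq:cocycle} the \emph{natural} transmission law in the volume-theoretic language of the method rather than an ad hoc algebraic constraint.

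The main obstacle is not any single calculation but the fact that the photography method is a heuristic principle rather than a formal axiom system; the statement is therefore a correspondence to be exhibited faithfully, not a deduction from prior axioms. The delicate point is to articulate precisely what counts as \emph{data} and as a \emph{transmission law} so that the matching is faithful and not merely suggestive, and this is exactly where Lemma \ref{volume} does the real conceptual work, by certifying that an $n$-cocycle is literally ``a volume'' in the sense the method demands.
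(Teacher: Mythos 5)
Your proposal is correct and follows essentially the same route as the paper: the same dictionary (states are triangulations, data are the $G$-labels on edges and cocycle values on $n$-simplices, transmission law is the cocycle condition \eqref{eq:cocycle}), the same invariance checks for the vertex-preserving and vertex-creating Pachner moves with the $1/|G|$ normalization, and the same appeal to Lemma \ref{volume} to interpret cocycles as signed volumes. Your version is somewhat more explicit about Pachner connectedness and general $(k,n+2-k)$ moves, but this only fleshes out what the paper's proof already asserts.
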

\begin{proof}
We associate weights to $n$-simplices.
These weights satisfy the cocycle condition which means that 
the sum of weights of a boundary is zero.
We checked the cocycle condition.
This means that we checked that the sum of volumes is zero.

For example, we consider the case $n=3$.
Regarding the values of cocycles as volumes up to a sign corresponding to the orientation, the cocycle condition may be interpreted as the invariance of the sum of volumes, see Fig.~\ref{fig:Pachner} and Fig.~\ref{fig:Pachner2}. In the Pachner 2-3 move, 
the sum of the volumes of the tetrahedra 0123, 0124, and 0234 equals the sum of the volumes of the tetrahedra 0134 and 1234. This is exactly the cocycle condition (up to a sign)
$$\alpha[g|h|k]+\alpha[gh|k|l]+\alpha[g|h|kl]=\alpha[g|hk|l]+\alpha[h|k|l].$$
In the Pachner 1-4 move, the volume of the tetrahedron 0123 equals the sum of the tetrahedra 0124, 0234, 0134, and 1234. This is exactly the cocycle condition (up to a sign)
$$\alpha[g|h|k]=\alpha[g|h|kl]+\alpha[gh|k|l]+\alpha[g|hk|l]+\alpha[h|k|l].$$

Therefore, the data satisfy the data transmission law (the $n$-cocycle condition) for two adjacent triangulations connected by a Pachner move. The photography method constructs the Dijkgraaf-Witten invariants.

\end{proof}

%Here volume is just a cocycle.
Recall that in \cite[Sec. 3]{ManturovWanMay2023},
we took areas.
They are not really areas in some Euclidean space nor hyperbolic space.
They are formal variables, maybe from some field.
So, we can predict a solution by looking at some geometry in some proper sense. Say, area of the quadrilateral equals sum of areas of constituent parts, and then
we can check it by hand.
Here we do almost the same.
We take volumes, but volumes can not be negative, hence we take cocycles.
The only thing we need is that: 
a cocycle is linear and
its coboundary is 0.
This is what is guaranteed by volumes, but this may go beyond the volume in the strict sense.

\section{Further research}\label{sec:further}

The author is excited with the paper by Kevin Walker \cite{Walker}.
%%%%%%%%%%%Refer to his paper from 2021
In that paper, he presents a ``universal'' state-sum invariant. In our opinion,
one of the greatest advantages of that paper is that it deals with cell decompositions/
handle slides and handle cancellations rather than with Pachner moves.

But there are a lot of similarities between that work and our work: 
we associate some {\em data} to cells, and this data is {\em naturally changed}
under {\em moves}. 

The next natural task is the
Crane-Yetter invariant, which we shall do in a subsequent publication.

%%%%%%% It would be great if you say what sort of data
%%%%%%% one should take in order to construct the CY
%%%%%%% invariant

% It would be extremely interesting to analyse to which extent the work of
%Kevin Walker can be explained in terms of the photography method, and,
%if not, to which extent the photography method can be amalgamated.

%%%% Tell that it is important to CALCULATE
%%%%% OUR INVARIANTS
%%%%%%%%%%%You may say that  
%%%%%%%%% CALCULATING DW is HARD and 
%%%%refer to your previous papers.

It is important to calculate our invariants. Calculating Dijkgraaf-Witten invariants is hard and some special cases were calculated in \cite{de,Hu,Huang,Wan,Wang,WangWen,Wen}.

\end{document}